\newtheorem{theorem}{Theorem}[section]
\theoremstyle{definition}
\newtheorem{definition}[theorem]{Definition}
\newtheorem{example}[theorem]{Example}
\theoremstyle{remark}
\numberwithin{equation}{section}
\newcommand{\norm}[1]{\left\lVert#1\right\rVert}
\newcommand{\floor}[1]{\left\lfloor#1\right\rfloor}
\begin{document}
\title{On the regularity and approximation of invariant densities for random continued fractions}
\author{Toby Taylor-Crush}
    \address{Department of Mathematical Sciences, Loughborough University, 
Loughborough, Leicestershire, LE11 3TU, UK}
\email{T.Taylor-Crush@lboro.ac.uk}
\thanks{We would like to thank anonymous referee for useful comments and suggestions that improved the content and presentation of the paper.}
\subjclass{Primary 37A05, 37E05}
\keywords{Continued fractions, Interval maps, Invariant densities, Random dynamical systems.}
\begin{abstract}
We study perturbations of random dynamical systems whose associated transfer operators admit a uniform spectral gap. We provide a $k^{\text{th}}$-order approximation for the invariant density of the associated random dynamical system. We apply our result to random continued fractions. 
\end{abstract}
\maketitle
\pagestyle{myheadings} 
\markboth{Invariant densities for random continued fractions }{Toby Taylor-Crush}
\section{Introduction}
When considering the long term statistics of a dynamical system, the associated invariant densities are indispensable. A natural question for the study of dynamics is to ask how the invariant densities change with some perturbation. It is known that for certain classes of maps the change in the density is smooth with certain perturbations, this is called linear response and allows us to find a first order approximation of invariant density for perturbed system. See \cite{Baladi} for a survey on linear response for deterministic systems and \cite{GS} for a recent work in the setting of random dynamical systems.  In this note, we work in the setting of random dynamical systems and work with a particular case of the systems studied in  \cite{Wael}: the case when the transfer operators admit a uniform spectral gap on a suitable Banach space. Under higher regularity assumptions on the constituent maps of the random system, we provide a higher order approximation of the invariant density of the underlying perturbed random system. We then apply this result to provide a higher order approximation of the invariant density of the Gauss-R\'enyi random system, which is the main motivation of our work. Such a system was first introduced in \cite{Kalle} where the authors describe the map in detail and demonstrate the existence of an invariant density for the system, called $h_p$. They then go on to describe various properties, such as it being absolutely continuous with respect to the Lebesgue measure, bounded away from $0$ and that the map is mixing with respect to the invariant measure. However, unlike in the classical Gauss map case, the formula of the invariant density of such a random system is unknown. Hence, it is natural to ask for an approximation of such an invariant density.

The invariant densities of various similar numerical expansions, like continued fraction expansions, have been widely studied in the past. In \cite{MeasureForMap} explicit formulae in the form of integrals are given for the invariant density of a class of continued fraction type maps. In \cite{Brjuno} they find the invariant densities of a one parameter family of continued fraction expansions while studying Brjuno functions. In \cite{DdeV} the existence and uniqueness of the invariant measure of the generating function of random $\beta$-expansions of numbers is established and shown to be mutually singular with the measure of maximal entropy. In \cite{NonRegular} they find the invariant density for non-regular continued fractions by proving a Gauss-Kuzmin type theorem. In \cite{ContFracDensitys} they find many invariant densities of maps for continued fractions with finitely many digits. They use various methods including natural extensions to find exact invariant densities and found approximations using the Gauss-Kuzmin-L\'evy theorem and by statistical approaches. In \cite{DO} they are able to demonstrate the existence of invariant probability measures for the maps relating to random $N$-continued fractions, which they study in a similar way to \cite{Kalle}. In \cite{Wael} they provide a first order approximation of the invariant density of the random continued fractions studied in \cite{Kalle}. In this note we show that an approximation of any order $k$ can be obtained for such a density. 

The layout of this paper is as follows, section \ref{Preliminaries} we specify the type of system we are dealing with, we describe a problem which can be solved using invariant densities, and a very similar problem directly relating to the subject of this paper. In section \ref{Sect:Result} we prove the main theorem of this paper, and in section \ref{Sect:Example} we discuss the application of this result to the problem described in section \ref{Preliminaries}. In particular, we provide an approximation to the distribution of the $n^{th}$ digit of a random continued fraction expansion, in general and we work out the computations of a particular example (see subsection \ref{subsect:Example}).
\section{Preliminaries}\label{Preliminaries}
Consider a family of maps $T_\omega:[0,1]\to[0,1]$ non-singular with respect to the Lebesgue measure $m$, with transfer operator $L_{T_\omega}$. Given some density $f$ on $[0,1]$, $L_{T_\omega}f$ is the density after $T_\omega$ has been applied to $[0,1]$ with density $f$. Let $(\Omega,\mathcal{F},\mathbb{P})$ be a probability space. We study independent identically distributed, with respect to $\mathbb{P}$, compositions of $T_\omega$. We call these compositions random maps. 

\subsection{Transfer operator for random maps}
For these systems we will need to describe their transfer operator.

The random map $(\Omega,\{T_\omega\},\mathbb{P})$ is understood as a Markov process with transition function
\begin{equation}
p(x,S)=\int_\Omega  1_S(T_\omega(x)) d\mathbb{P}(\omega)\nonumber
\end{equation}
with $x\in[0,1]$ and $S\subset [0,1]$. A measure $\mu$ is stationary if
\begin{equation}
\int_Xp(x,S)d\mu(x)=\mu(S).\label{eq:stat}
\end{equation}

For any $\phi\in L^\infty(X)$ and $\Phi\in L^1(X)$ we have
\begin{align}
\int_X\int_\Omega \phi\circ T_\omega\Phi d\mathbb{P}(\omega)d\mu(x)=&\int_\Omega  \int_X  \phi\circ T_\omega\Phi d\mu(x) d\mathbb{P}(\omega)\nonumber\\
=&\int_\Omega \int_X  \phi L_{T_{\omega}}\Phi d\mu(x)d\mathbb{P}(\omega)  \nonumber\\
=&\int_X\phi\int_\Omega   L_{T_{\omega}}\Phi d\mathbb{P}(\omega) d\mu(x)\label{eq:duality}
\end{align}
where
\begin{equation}
L_{T_\omega}\Phi:=\sum_{a\in A} |V'_{a,\omega}|\cdot\Phi \circ V_{a,\omega}
\end{equation}
is the transfer operator for $T_\omega$, with $V_{a,\omega}$ being the inverse of the map $T_\omega$ restricted to the interval labeled $a$, where the set of labels is $A$.
\begin{definition}
The Annealed transfer operator (sometimes called averaged transfer operator) is defined as
\begin{equation}
L_0\Phi:=\int_{\Omega} L_{T_\omega}\Phi d\mathbb{P}(\omega).
\end{equation}
We see in \eqref{eq:stat} that for any stationary measure $\mu$ absolutely continuous with respect to $m$ , with density $h$, has the property
\begin{equation}
L_0h=h.\nonumber
\end{equation}
The annealed transfer operator is what we will be calling the transfer operator of the random map $(\Omega,\{T_\omega\},\mathbb{P})$.
\end{definition}

\section{Higher order approximations of invariant densities for random maps}\label{Sect:Result}

Here we look at how we can estimate the absolutely continuous stationary measure of a system like the one described in section \ref{sect:randcontfrac}. The result below is quite general and can be applied to other random maps.

\subsection{Perturbed random maps}

Consider the interval $[0,1]$ and the Lebesgue measure $m$. If we have a system $(\Omega,\{T_\omega\},\mathbb{P}_0)$ with $\{T_\omega\}$ a family of non-singular maps, $T_\omega:[0,1]\to [0,1]$, such that for each $\omega\in\Omega$ there exists at most countably many branches of $T_\omega$ in intervals labeled $\underline{a}\in A$. Let $\{I_{\underline{a},\omega} | a\in A\}$ be a partition of $[0,1]$  half-open intervals of such that the restriction of $T_\omega$ to $I_{\underline{a},\omega}$ is surjective and $C^{l+1}$.

We want to study the perturbed system $(\Omega,\{T_\omega\},\mathbb{P}_\epsilon)$ where the map $T_{\omega}$ is chosen according to a new distribution $\mathbb{P}_\epsilon$. $L_\epsilon:C^{l}([0,1])\to C^{l}([0,1])$, is the annealed transfer operator under $\mathbb{P}_\epsilon$.
\begin{definition}
We say the family of operators $L_\epsilon$ has a uniform spectral gap if there exists a $\delta>0$, independent of $\epsilon$, such that for all $\epsilon$ small enough, $1$ is the unique eigenvalue of $L_\epsilon$ of modulus one, and the rest of the spectrum is contained in the ball centered at $0$ of radius $\delta$; i.e, $\sigma(L_\epsilon)\setminus\{1\}\subset B(0,\delta)$.
\end{definition}
\begin{theorem}\label{MainProp}
Assume that
\begin{enumerate}
	\item \label {Assumption1}$L_\epsilon$ has a uniform spectral gap in $C^{l}([0,1])$, $l\ge 1$.
	\item \label {Assumption2}$\epsilon \mapsto L_\epsilon h_0$ is $k$ times differentiable\footnote{In applications to specific situations, one may need to specify the relation between $k$ and $l$, see \cite{Wael} for example where $\partial_\epsilon^iL_\epsilon h_0$ involves a derivative in $x$. However, in other specific situations, such a restriction is not necessary. Indeed, when $L_\epsilon f=(1-\epsilon) L_0 f +\epsilon L_1 f$ such a restriction is not needed. 
}
 as an element of $C^l([0,1])$ at $\epsilon=0$.
	\item \label {Assumption3}$\epsilon \mapsto (I-L_\epsilon)^{-1}G_i$ is $k-i$ times differentiable in $C^l([0,1])$ at $\epsilon=0$,
\end{enumerate}
 where $h_0$ is the invariant density of the system $(\Omega,\{T_\omega\},\mathbb{P}_0)$ and $G_i:=\partial_{\epsilon}^i L_{\epsilon} h_0|_{\epsilon=0}$ with $i=0,\dots,k-1$. Then $\epsilon\mapsto h_\epsilon$ is $k$ differentiable as an element of $C^l([0,1])$ at $\epsilon=0$.

In particular the invariant density of the system $(\Omega,\{T_\omega\},\mathbb{P}_\epsilon)$, $h_\epsilon$, can be approximated using the formula
\begin{equation}
h_\epsilon=h_0+\sum_{n=1}^k\frac{\epsilon^n}{n!} \partial_\epsilon^n h_\epsilon|_{\epsilon=0}+o(\epsilon^k)\nonumber
\end{equation}
where 
\begin{equation}\label{Result}
\partial_\epsilon^n h_\epsilon|_{\epsilon=0}=\sum_{i=1}^n \binom{n}{ i}H_{i,n-i}
\end{equation}
and
\begin{equation}H_{i,n-i}=\partial_\epsilon^{n-i}(I-L_\epsilon)^{-1}G_i|_{\epsilon=0}.\nonumber\end{equation}
\end{theorem}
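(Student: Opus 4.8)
The strategy is to differentiate the fixed-point relation $L_\epsilon h_\epsilon = h_\epsilon$ repeatedly in $\epsilon$, together with a normalisation like $\int_0^1 h_\epsilon\,dm = 1$, and to extract a closed-form recursion for the derivatives $\partial_\epsilon^n h_\epsilon|_{\epsilon=0}$. The uniform spectral gap (Assumption \ref{Assumption1}) is what makes $(I - L_\epsilon)$ invertible on the complement of the top eigenspace, uniformly in $\epsilon$; Assumptions \ref{Assumption2} and \ref{Assumption3} are exactly the regularity inputs needed to differentiate the products that appear. So the proof splits cleanly into: (i) establish differentiability of $\epsilon\mapsto h_\epsilon$ at $\epsilon = 0$ as an element of $C^l([0,1])$, using the spectral gap to write $h_\epsilon$ via a resolvent/spectral projection and invoking the hypotheses to differentiate; (ii) compute the derivatives explicitly by Leibniz, arriving at \eqref{Result}.

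\textbf{Step (i): differentiability.} First I would fix a small $\epsilon_0$ so that on $B(0,\delta)^c$ the operator $(I - L_\epsilon)$ is boundedly invertible for all $\abs{\epsilon} < \epsilon_0$, with norm bounded uniformly in $\epsilon$. The spectral gap lets one write $h_\epsilon = \Pi_\epsilon \mathbf{1}$ up to normalisation, where $\Pi_\epsilon$ is the spectral projection onto the eigenvalue $1$; alternatively, decomposing $C^l = \langle h_\epsilon\rangle \oplus \ker(\text{normalisation functional})$, on the complementary subspace $(I - L_\epsilon)$ is invertible and one can bootstrap differentiability. Concretely, applying the fixed-point relation to $h_0$, one gets $(I - L_\epsilon) h_\epsilon = (I-L_\epsilon)h_0 - (I-L_\epsilon)h_0 + (h_\epsilon - h_0) \cdots$; the cleaner route is to write $h_\epsilon - h_0 = (I - L_\epsilon)^{-1}\big(L_\epsilon h_0 - h_0\big)$ on the complement of the kernel — here $L_\epsilon h_0 - h_0 = (L_\epsilon - L_0)h_0$ vanishes at $\epsilon = 0$ and is $k$ times differentiable by Assumption \ref{Assumption2}. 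Then Assumption \ref{Assumption3}, which says $\epsilon\mapsto (I - L_\epsilon)^{-1}G_i$ is $(k-i)$ times differentiable, is precisely the statement that the resolvent acts well on the Taylor coefficients $G_i$ of $(L_\epsilon - L_0)h_0$, and combining these gives that $\epsilon\mapsto h_\epsilon$ is $k$ times differentiable in $C^l$.

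\textbf{Step (ii): the formula.} With differentiability in hand, differentiate $(I - L_\epsilon)h_\epsilon = 0$, or rather $h_\epsilon = L_\epsilon h_\epsilon$, by the Leibniz rule. Splitting off the $h_0$ part and writing $h_\epsilon = h_0 + (h_\epsilon - h_0)$, one finds $(I - L_\epsilon)(h_\epsilon - h_0) = (L_\epsilon - L_0)h_0$; applying $\partial_\epsilon^n$ at $\epsilon = 0$ and using Leibniz on the left gives a recursion whose solution, organised by the Taylor coefficients $G_i = \partial_\epsilon^i L_\epsilon h_0|_{\epsilon = 0}$, is $\partial_\epsilon^n h_\epsilon|_{\epsilon = 0} = \sum_{i=1}^n \binom{n}{i} H_{i, n-i}$ with $H_{i, n-i} = \partial_\epsilon^{n-i}(I - L_\epsilon)^{-1} G_i|_{\epsilon = 0}$. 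One has to be careful that each $(I - L_\epsilon)^{-1}$ is understood as acting on the correct invariant complement (so that it is genuinely bounded), and that the normalisation $\int(h_\epsilon - h_0)\,dm = 0$ is consistent with this decomposition — this is where the constant $G_0$ term and the eigenprojection bookkeeping must be handled. Finally, Taylor's theorem with the $o(\epsilon^k)$ remainder follows from $k$-fold differentiability at $\epsilon = 0$.

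\textbf{Main obstacle.} The delicate point is the interplay between the spectral projection and the normalisation: $(I - L_\epsilon)$ is \emph{not} invertible on all of $C^l([0,1])$ — it has nontrivial kernel spanned by $h_\epsilon$ — so every occurrence of $(I - L_\epsilon)^{-1}$ must be interpreted as the inverse restricted to the range of $(I - \Pi_\epsilon)$, and one must check that the relevant right-hand sides ($G_i$, and the lower-order terms in the recursion) indeed lie in that range so the expression is well defined and the uniform bound applies. Verifying that $\partial_\epsilon^{n-i}(I-L_\epsilon)^{-1}G_i$ stays in this complement and that the Leibniz recursion closes consistently is the part requiring genuine care; the rest is bookkeeping with the binomial coefficients.
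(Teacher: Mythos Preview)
Your proposal is correct and follows essentially the same route as the paper: the key identity $h_\epsilon - h_0 = (I-L_\epsilon)^{-1}(L_\epsilon - L_0)h_0$ on the zero-average subspace $C^l_0([0,1])$, together with the observation that each $G_i$ has zero integral so the restricted resolvent applies, is exactly what the paper uses. The paper then substitutes the Taylor expansion of $(L_\epsilon - L_0)h_0$ from Assumption~\ref{Assumption2} and the Taylor expansion of each $(I-L_\epsilon)^{-1}G_i$ from Assumption~\ref{Assumption3}, and reindexes the resulting double sum by $n=i+j$ to read off the coefficients --- this Cauchy-product manipulation is equivalent to your Leibniz step, and yields the formula directly without first establishing differentiability and then computing derivatives as two separate stages.
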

\begin{proof}
This proof develops that of \cite{Wael} from the case of first order differentiation to the $k^{th}$ order. By assumption \ref{Assumption1} we have a uniform spectral gap in $\epsilon$, so we have that $(I-L_\epsilon)^{-1}$ is well defined on $C^l_0([0,1])$, the subset of $C^l([0,1])$ of elements with $0$ average, and is uniformly bounded in $\epsilon$. We have
\begin{equation}\label{Identheps}
h_\epsilon=(I-L_\epsilon)^{-1}(L_\epsilon-L_0)h_0+h_0.
\end{equation}
By assumption \ref{Assumption2} we have
\begin{equation}\label{eq:Proof1}
L_\epsilon h_0=\sum_{i=0}^k\frac{\epsilon^i}{i!}G_i+o(\epsilon^k).
\end{equation}
Notice that the first term in the sum on the right hand side of \eqref{eq:Proof1} is $L_0 h_0$, consequently
\begin{equation}
(L_\epsilon-L_0)h_0=\sum_{i=1}^k\frac{\epsilon^i}{i!} G_i+o(\epsilon^k).\nonumber
\end{equation}
Noticing that $G_i$ is zero-average, we may substitute this into \eqref{Identheps} to give
\begin{align}
h_\epsilon=&h_0+\sum_{i=1}^k \frac{\epsilon^i}{i!} (I-L_\epsilon)^{-1} G_i+ (I-L_\epsilon)^{-1} o(\epsilon^k)\nonumber\\ 
=&h_0+\sum_{i=1}^k \frac{\epsilon^i}{i!} (I-L_\epsilon)^{-1} G_i+o(\epsilon^k)\label{eq:heps},
\end{align}
where in the last step we have used the uniform boundedness, in $\epsilon$, of $(I-L_\epsilon)^{-1}$ on  $C^l_0([0,1])$. Now we take the Taylor series of each $(I-L_\epsilon)^{-1} G_i$, where we use assumption \ref{Assumption3},
\begin{equation}
(I-L_\epsilon)^{-1} G_i=\sum_{j=0}^{k-i}\frac{\epsilon^{j}}{j!}H_{i,j}+o(\epsilon^{k-i})\nonumber
\end{equation}
 which we can substitute into \eqref{eq:heps} to get
\begin{align}
h_\epsilon&=h_0+\sum_{i=1}^k \frac{\epsilon^i}{i!}\left[\sum_{j=0}^{k-i}\frac{\epsilon^j}{j!} H_{i,j}+o(\epsilon^{k-i})\right]+o(\epsilon^k)\nonumber\\
&=h_0+\sum_{i=1}^k \left[\sum_{j=0}^{k-i}\frac{\epsilon^{i+j}}{i!j!}H_{i,j}+\epsilon^i o(\epsilon^{k-i})\right]+o(\epsilon^k)\nonumber\\
&=h_0+\sum_{i=1}^k \left[\sum_{j=0}^{k-i}\frac{\epsilon^{i+j}}{i!j!} H_{i,j}\right]+o(\epsilon^k)\nonumber\\
&=h_0+\sum_{n=1}^k \sum_{i=1}^n\frac{\epsilon^{i+(n-i)}}{i!(n-i)!} H_{i,n-i}+o(\epsilon^k)\nonumber\\
&=h_0+\sum_{n=1}^k\epsilon^n\sum_{i=1}^n\frac{1}{i!(n-i)!}H_{i,n-i}+o(\epsilon^k)\nonumber\\
&=h_0+\sum_{n=1}^k\frac{\epsilon^n}{n!}\sum_{i=1}^n\frac{n!}{i!(n-i)!}H_{i,n-i}+o(\epsilon^k)\nonumber\\
&=h_0+\sum_{n=1}^k\frac{\epsilon^n}{n!}\sum_{i=1}^n \binom{n}{ i}  H_{i,n-i}+o(\epsilon^k).\nonumber
\end{align}
This finishes the proof.
\end{proof}

\section{The Gauss-R\'enyi random map}\label{Sect:Example}

\subsection{Random continued fractions}\label{sect:randcontfrac}
Consider the continued fraction representation of some $x\in[0,1]\backslash \mathbb{Q}$
\begin{equation}
x=\dfrac{1}{a_1+\dfrac{1}{a_2+\dfrac{1}{a_3+\dots}}}.\label{eq:ContinuedFraction}
\end{equation}
We may write $[x]=[a_1,a_2,a_3,\dots]$ to represent this continued fraction. A question posed by Gauss (who presented a solution in a letter to Laplace) in 1800, with proofs and convergence shown by Kuzmin \cite{Kuzmin} and improved by L\'evy \cite{Levy}, was to find the distribution of numbers $x\in[0,1]$ such that $a_n=N$ for some $N\in\mathbb{N}$ and $n>>1$. Gauss was able to show that
\begin{equation}\label{eq:GaussFormula}
\lim_{n\to\infty}m(\{x\in[0,1]:a_n=N\})=\frac{1}{\log{2}}\log{\frac{1+\frac{1}{N}}{1+\frac{1}{N+1}}}
\end{equation}
where $m$ is the Lebesgue measure.

We would like to answer the same question for a different system, the system of randomly choosing semi-regular continued fractions for a given $x$ as discussed in \cite{Kalle}.
\begin{definition}
A semi-regular continued fraction is a representation of an $x\in[-1,1]\backslash\mathbb{Q}$ that takes the form,
\begin{equation}
x=\dfrac{(-1)^{\omega_0}}{a_1+\dfrac{(-1)^{\omega_1}}{a_2+\dots}}\nonumber
\end{equation}
where $\omega_n\in\{0,1\}$ and $a_n\in\mathbb{N}$.
\end{definition}
 We choose these from $\omega=[\omega_1,\omega_2,\omega_3,\dots]\in\Omega^{\mathbb{N}}=\{0,1\}^{\mathbb{N}}$ according to some distribution $\mathbb{P}$. It should be noted that $\omega_0$ depends on whether $x\in(0,1]$ or $x\in[-1,0)$ and so is not chosen randomly with the others. We should note that while there is a unique continued fraction representation of $x\in[0,1]$ there are uncountably many semi-regular continued fractions representations of $x\in[-1,1]$.

To study random continued fractions we need a new dynamical system $K:\Omega^{\mathbb{N}}\times[-1,1]\to\Omega^{\mathbb{N}}\times[-1,1]$ that serves the same function as the Gauss map $T_0$ did for the regular continued fractions problem. We will define this as
\begin{equation}
K(\omega,x)=(\sigma(\omega),K_2(\omega,x)).\nonumber
\end{equation}
where
\begin{equation}
K_2(\omega,x)=\frac{1}{|x|}-k-\omega_1\nonumber
\end{equation}
for $k=\floor{\frac{1}{|x|}}$, and $\sigma$ the left shift map. We have that $\sigma([x])=[K_2(\omega,x)]$, where $[x]$ is the list of letters of the continued fraction expansion of $x$. We see this by rearranging $K_2$ and writing
\begin{equation}
x=\frac{(-1)^{\omega_0}}{k+\omega_1+K_2(\omega,x)}.\nonumber
\end{equation}
We retrieve the digits of the semi-regular continued fraction using $d(\omega,x)=k+\omega_1$ and setting
\begin{equation}
d_n=d(K_2^{n-1}(\omega,x)).\nonumber
\end{equation}
This system can be written in terms of the Gauss map, $T_0$, and the R\'enyi map, $T_1$:
\begin{align}
T_0(x)=&\frac{1}{x}-\floor{\frac{1}{x}}\nonumber;\\
T_1(x)=&\frac{1}{1-x}-\floor{\frac{1}{1-x}},\nonumber
\end{align}
with $T_0(0)=0$ and $T_1(1)=0$. We can write $K_2$ as
\begin{equation}
K_2(\omega,x)=T_{\omega_0}(x+\omega_0)-\omega_1.\nonumber
\end{equation}

The properties of this system are not so easy to study, so we will look at another system which is conjugate to $K$, as was done in \cite{Kalle} to demonstrate the existence of a unique invariant probability measure for $K$. This system is the Gauss-R\'enyi map, $R:\Omega^{\mathbb{N}} \times [0,1]\to\Omega^{\mathbb{N}} \times [0,1]$
\begin{equation}
R(\omega,x)=(\sigma(\omega),T_{\omega_1}(x)).\nonumber
\end{equation}
This system is simpler, and moreover, we can retrieve the digits of our semi-regular continued fraction using the following function,
\begin{equation}
b(\omega,x)=k+\omega_2\nonumber
\end{equation}
where $\omega_1+(-1)^{\omega_1}x\in\left(\frac{1}{k+1},\frac{1}{k}\right]$. Then if we take $\omega'\in\Omega^{\mathbb{N}}$ such that $\omega'_1=0$ and $\omega'_{n+1}=\omega_n$ we have that
\begin{equation}
d_n(\omega,x)=b(R^{n-1}(\omega',x)).\nonumber
\end{equation}

The solution to the problem is then simply to find
\begin{align}
&\lim_{n\to\infty}m(\{x\in[-1,1]:d_n=N\})\nonumber\\
=&\lim_{n\to\infty}\int _{\Omega^{\mathbb{N}}}m(\{x\in[0,1]:b(R^{n-1}(\omega',x))=N\})\,{\rm d}\mathbb{P}(\omega) \nonumber.
\end{align}
The last expression is usually studied via the invariant measure of $R$, or simply the one corresponding the Markov process associated with the random system; i.e., satisfying \eqref{eq:stat}, which we denote by $\mu$. This is where a problem arises. Although we know that such a measure exists and is absolutely continuous with respect to the Lebesgue measure, from \cite{Kalle}, we do not know the formula of its density. We would like to estimate this measure using information that we can write out explicitly. In \cite{Wael} a first order approximation for the invariant density is given. Below we obtain a $k^{th}$ order approximation for any $k\in\mathbb{N}$. We achieve our goal via Theorem \ref{MainProp}. First we show that the required assumptions of Theorem \ref{MainProp} hold for the annealed transfer operator. This is done below.

\subsection{Uniform spectral gap on $C^{l}([0,1])$}

In order to show that the Transfer operator has spectral gap we will use Hennion's theorem \cite{Hennion}. In our case $(C^{l}([0,1]),\norm{\cdot}_{C^{l}})$ is our Banach space.

If $L$ is quasi-compact and if $L$ has $1$ as a unique simple eigenvalue on the unit circle, then it has a spectral gap. For our result we require this spectral gap to be uniform in $\epsilon$, that is there is some $\delta$ such that the difference between the largest two eigenvalues of $L_\epsilon$ is greater than $\delta$ for all $\epsilon$, this can be done using the Keller-Liverani Theorem (Theorem 1 from \cite{Liverani}) as shown below.
\subsubsection{The Keller-Liverani Theorem}\label{SpectGapNote}
From \cite{Liverani}, in order for a uniform spectral gap in $\epsilon$, in addition to the uniform Lasota-Yorke inequality we require another property to hold;
\begin{equation}
|||L_\epsilon-L_0|||\to 0\nonumber
\end{equation}
as $\epsilon\to0$, where
\begin{equation}
|||L|||=\sup_{\norm{f}_{C^l}\leq 1}\norm{Lf}_{C^{l-1}}.\nonumber
\end{equation}
We see this is true for our case since
\begin{align}
&|||L_\epsilon-L_0|||\nonumber\\
=&\sup_{\norm{f}_{C^l}\leq 1}\norm{L_\epsilon f-L_0 f}_{C^{l-1}}\nonumber\\
=&\sup_{\norm{f}_{C^l}\leq 1}\norm{(1-\epsilon) L_0f+\epsilon L_1f -L_0 f}_{C^{l-1}}\nonumber\\
=&\sup_{\norm{f}_{C^l}\leq 1}\norm{\epsilon (L_1-L_0)f }_{C^{l-1}}\nonumber\\
\leq&2\epsilon \cdot M\nonumber
\end{align}
where we have used the fact that $\norm{L_if}_{C^{l-1}}\leq M\norm{f}_{C^{l-1}}$.

We now test the Lasota-yorke inequality.
\subsubsection{The Lasota-Yorke inequality}\label{Spectgap}
We want to show that the Lasota-Yorke inequality holds for $L_\epsilon$ in $(C^{l}([0,1]),\norm{\cdot}_{C^{l}})$ with $\norm{\cdot}_{C^{l-1}}$ as our semi-norm, that is we want to show
\begin{equation}\label{LasotaYorke}
\norm{(L_\epsilon^nf)}_{C^{l}}\leq\theta^n\norm{f}_{C^{l}}+C'_1\norm{f}_{C^{l-1}}
\end{equation}
where $0<\theta<1$ and $C'_1>0$

The transfer operator of the $n^{th}$ iterate of the Gauss-R\'enyi random map is
\begin{equation}\label{eq:nthiterate}
L_\epsilon^nf=\sum_{\omega\in\Omega}P_\omega\sum_{\underline{a}}|(V^\omega_{\underline{a}})'|f\circ V^\omega_{\underline{a}}
\end{equation}
where  $P_\omega$ is the product of $\epsilon^i$ and $(1-\epsilon)^j$ where $i$ is the number of $1$'s in the first $n$ entries of $\omega$, and $j$ is the number of $0$'s in the first $n$ entries, and where $V^\omega_{\underline{a}}$ is the inverse of the branch of the Gauss-R\'enyi map labeled $\underline{a}$, when the map takes the path $\omega$. We will show that this inequality holds for the second iterate, which implies the result for more iterates.
\begin{align}
L_\epsilon  ^2f=&(1-\epsilon)^2\sum_{\underline{a}}|(V^{0,0}_{\underline{a}})'|f\circ V^{0,0}_{\underline{a}}+\epsilon(1-\epsilon)\sum_{\underline{a}}|(V^{1,0}_{\underline{a}})'|f\circ V^{1,0}_{\underline{a}}+\nonumber\\
&(1-\epsilon)\epsilon \sum_{\underline{a}}|(V^{0,1}_{\underline{a}})'|f\circ V^{0,1}_{\underline{a}} +\epsilon^2\sum_{\underline{a}}|(V^{1,1}_{\underline{a}})'|f\circ V^{1,1}_{\underline{a}}\label{eq:L2}
\end{align}
where $V^{p,q}_{\underline{a}}$, $p,q\in\{0,1\}$, is $V^p_{a_1}\circ V^q_{a_2}$ where $V^0_{a}$ is the inverse of the $a^{th}$ branch of the Gauss map and $V^1_{a}$ is the inverse of the $a^{th}$ branch of the R\'enyi map. 

Now we get that
\begin{align}
&(\sum_{\underline{a}}|(V^{0,0}_{\underline{a}})'|f\circ V^{0,0}_{\underline{a}})^{(i)}\nonumber\\
\leq&\norm{f^{(i)}}_{C^0}\sum_{\underline{a}}|((V_{\underline{a}}^{0,0})')^{i}|\nonumber\\
&+\sum_{j=0}^{i-1} \norm{f^{(j)}}_{C^0}  \sum_{\underline{a}}\left[{{i}\choose{j}}|(V^{0,0}_{\underline{a}})^{(i-j+1)}| +\prod_{s=0}^iD_{1s}((V_{\underline{a}}^{0,0})^{(s)})^{D_{2s}}\right]\label{est1}
\end{align}
where $D_{1s}$ and $D_{2s}$ are real non-negative constants depending on $s$, and equally
\begin{align}
&(\sum_{\underline{a}}|(V^{1,1}_{\underline{a}})'|f\circ V^{1,1}_{\underline{a}})^{(i)}\nonumber\\
\leq&\norm{f^{(i)}}_{C^0}\sum_{\underline{a}}|((V_{\underline{a}}^{1,1})')^{i}|\nonumber\\
&+\sum_{j=0}^{i-1} \norm{f^{(j)}}_{C^0} \sum_{\underline{a}}\left[{{i}\choose{j}} |(V^{1,1}_{\underline{a}})^{(i-j+1)}|+\prod_{s=0}^iD_{1s}((V_{\underline{a}}^{1,1})^{(s)})^{D_{2s}}\right].\label{est2}
\end{align}
We have that
\begin{align}
(V^{0,0}_{(n,k)})'(x)=&\frac{1}{(n(k+x)+1)^2}\label{eq:001}\\
(V^{1,1}_{(n,k)})'(x)=&\frac{1}{((n+1)(k+x)-1)^2}.\label{eq:111}
\end{align}

Given that $(V^{0,0}_{\underline{a}})'=-(V^{1,0}_{\underline{a}})'$ and $(V^{1,1}_{\underline{a}})'=-(V^{0,1}_{\underline{a}})'$ these also give us formulas for the $i^{th}$ derivative of $\sum_{\underline{a}}|(V^{1,0}_{\underline{a}})'|f\circ V^{1,0}_{\underline{a}}$ and $\sum_{\underline{a}}|(V^{0,1}_{\underline{a}})'|f\circ V^{0,1}_{\underline{a}} $. This gives us estimates on all the terms of $\norm{(L^2_\epsilon f)^{(i)}}_{C^0}$ in the form that we want, that being in terms of \eqref{est1} and \eqref{est2}.
We can now give an estimate on $\norm{(L^2_\epsilon f)^{(i)}}_{C^0}$. 
\begin{align}
&\norm{(L_\epsilon  ^2f)^{(i)}}_{C^0}\nonumber\\
\leq&(1-\epsilon)^2\norm{(\sum_{\underline{a}}|(V^{0,0}_{\underline{a}})'|f\circ V^{0,0}_{\underline{a}})^{(i)}}_{C^0}\nonumber\\
+&\epsilon(1-\epsilon)\norm{(\sum_{\underline{a}}|(V^{1,0}_{\underline{a}})'|f\circ V^{1,0}_{\underline{a}})^{(i)}}_{C^0}\nonumber\\
+&(1-\epsilon)\epsilon \norm{(\sum_{\underline{a}}|(V^{0,1}_{\underline{a}})'|f\circ V^{0,1}_{\underline{a}})^{(i)}}_{C^0} \nonumber\\
+&\epsilon^2\norm{(\sum_{\underline{a}}|(V^{1,1}_{\underline{a}})'|f\circ V^{1,1}_{\underline{a}})^{(i)}}_{C^0}\nonumber
\end{align}
which we insert our formulas \eqref{est1} and \eqref{est2} into, and reduce, to get
\begin{align}
&\norm{(L_\epsilon  ^2f)^{(i)}}_{C^0}\nonumber\\
\leq&(1-\epsilon)\norm{f^{(i)}}_{C^0}\sum_{\underline{a}}|((V_{\underline{a}}^{0,0})')^{i}|+\epsilon \norm{f^{(i)}}_{C^0}\sum_{\underline{a}}|((V_{\underline{a}}^{1,1})')^{i}|+C_i'\sum_{j=0}^{i-1}\norm{f^{(j)}}_{C^0}\nonumber\\
=&\norm{f^{(i)}}_{C^0}\left[(1-\epsilon)\sum_{\underline{a}}|((V_{\underline{a}}^{0,0})')^{i}|+\epsilon \sum_{\underline{a}}|((V_{\underline{a}}^{1,1})')^{i}|\right]+C_i'\norm{f^{(j)}}_{C^{i-1}}\nonumber\\
=&\theta \norm{f^{(i)}}_{C^0}+C_i'\norm{f^{(j)}}_{C^{i-1}}.\nonumber
\end{align}

where $C_i'$ is the maximum of the sum over $\underline{a}$ in equation \eqref{est1} and the sum over $\underline{a}$ in \eqref{est2}. In order for these estimates to fulfill \eqref{LasotaYorke} we need $C_i'$ finite and $\theta\in(0,1)$. To show that $\theta\in(0,1)$ we will use \eqref{eq:001} and \eqref{eq:111}.

From \eqref{eq:001} we have
\begin{align}
&\sum_{\underline{a}}|((V_{\underline{a}}^{0,0})')^{i}|\nonumber\\
\leq &\sum_{n,k=1}^\infty\left|\frac{1}{(nk+1)^{2i}}\right|\nonumber\\
= &\left(\sum_{n=1}^\infty\left|\frac{1}{n^{2i}}\right|\right)\left(\sum_{k=1}^\infty\left|\frac{1}{(k+\frac{1}{n})^{2i}}\right|\right)\nonumber\\
\leq&\zeta (2i) ^ 2-(1-\frac{1}{2^{2i}})\nonumber
\end{align}
which for integer $i\geq2$ is less than $1$, and $\theta_1=\zeta (2i) ^ 2-(1-\frac{1}{2^{2i}})$ goes to $0$ as $i$ goes to infinity. From \eqref{eq:111} we have
\begin{align}
&\sum_{\underline{a}}|((V_{\underline{a}}^{1,1})')^{i}|\nonumber\\
\leq &\sum_{n,k=1}^\infty\left|\frac{1}{((n+1)k-1)^{2i}}\right|\nonumber\\
=&1+\sum_{n=1}^\infty\sum_{k=2}^\infty\left|\frac{1}{((n+1)k-1)^{2i}}\right|+\sum_{n=2}^\infty\left|\frac{1}{n^{2i}}\right|\nonumber
\end{align}
which can be rewritten as
\begin{align}
&1+\sum_{n=1}^\infty\left|\frac{1}{(n+1)^{2i}}\sum_{k=2}^\infty\frac{1}{(k-\frac{1}{n+1})^{2i}}\right|+\zeta(2i)-1\nonumber\\
\leq&1+\zeta(2i)\sum_{n=1}^\infty\left|\frac{1}{(n+1)^{2i}}\right|+\zeta(2i)-1\nonumber\\
=&1+\zeta(2i)(\zeta(2i)-1)+(\zeta(2i)-1)\nonumber\\
=&1+(\zeta(2i)+1)(\zeta(2i)-1)\nonumber\\
=&1+\zeta(2i)^2-1\nonumber\\
=&\zeta(2i)^2,\nonumber
\end{align}
where $C_i=\zeta(2i)^2$ converges to $1$ as $i$ goes to infinity. These give us
\begin{equation}
\theta\leq(1-\epsilon)\theta_1+\epsilon C_i.\nonumber
\end{equation}
We should note that oll of these constants that give our estimates on $\theta_1$ and $C_i$ are independent of $\epsilon$. We take
\begin{equation}
0\leq\epsilon\leq \frac{1-\theta_1}{C_i-\theta_1}\nonumber
\end{equation}
whicgh gives us $\theta\in(0,1)$. We should note that we now have a restriction on our range of $\epsilon$. Since this relies on $\theta_1$ being less than one this is only shown for $i\geq2$, however for $i=1$ this inequality has been demonstrated in \cite{Wael}.

Now it remains to show that $C_i'$ is finite. To do this we need to show that 
\begin{equation}
\sum_{\underline{a}}\left[{{i}\choose{j}}|(V^{0,0}_{\underline{a}})^{(i-j+1)}|+\prod_{s=0}^iD_{1s}((V_{\underline{a}}^{0,0})^{(s)})^{D_{2s}}\right]<\infty\nonumber
\end{equation}
and
\begin{equation}
\sum_{\underline{a}}\left[{{i}\choose{j}}|(V^{1,1}_{\underline{a}})^{(i-j+1)}|+\prod_{s=0}^iD_{1s}((V_{\underline{a}}^{1,1})^{(s)})^{D_{2s}}\right]<\infty\nonumber
\end{equation}
for any given $i$ and $j$.
From \eqref{eq:001} and \eqref{eq:111} we get the formulas
\begin{align}
(V^{0,0}_{(n,k)})^{(i)}(x)=&\frac{i!n^{(i-1)}}{(n(k+x)+1)^{(i+1)}}\label{eq:00i}\\
(V^{1,1}_{(n,k)})^{(i)}(x)=&\frac{i!(n+1)^{(i-1)}}{((n+1)(k+x)-1)^{(i+1)}}.\label{eq:11i}
\end{align}
which, here $j$ may be an integer between $2$ and $i+1$, gives us
\begin{align}
&\sum_{n,k=1}^\infty(V^{0,0}_{(n,k)})^{(j)}(x)\nonumber\\
=&j!\sum_{n,k=1}^\infty\frac{n^{(j-1)}}{(n(k+x)+1)^{(j+1)}}\nonumber\\
\leq&j!\sum_{n=1}^\infty\sum_{k=1}^\infty\frac{n^{(j-1)}}{(nk+1)^{(j+1)}}\nonumber\\
\leq&j!\left(\sum_{n=1}^\infty\frac{1}{n^2}\right)\left(\sum_{k=1}^\infty\frac{1}{k^{j+1}}\right)\nonumber\\
\leq&j!\frac{\pi^4}{36}\nonumber
\end{align}
the last line of which is gotten by observing that $\sum_{n=1}^\infty\frac{1}{n^2}$ is $\frac{\pi^2}{6}$ and that $\sum_{k=1}^\infty\frac{1}{k^{j+1}}\leq\sum_{n=1}^\infty\frac{1}{n^2}$, and
\begin{align}
&\sum_{n,k=1}^\infty(V^{1,1}_{(n,k)})^{(j)}(x)\nonumber\\
=&j!\sum_{n,k=1}^\infty\frac{(n+1)^{(j-1)}}{((n+1)(k+x)+1)^{(j+1)}}\nonumber\\
\leq&j!\sum_{n=1}^\infty\sum_{k=1}^\infty\frac{(n+1)^{(j-1)}}{((n+1)k+1)^{(j+1)}}\nonumber\\
\leq&j!\left(\sum_{n=1}^\infty\frac{1}{(n+1)^2}\right)\left(\sum_{k=1}^\infty\frac{1}{k^{j+1}}\right)\nonumber\\
\leq&j!\frac{\pi^4}{36}.\nonumber
\end{align}
Now,
\begin{align}
&\sum_{\underline{a}}\prod_{s=0}^iD_{1s}((V_{\underline{a}}^{0,0})^{(s)})^{D_{2s}}\nonumber\\
\leq&\prod_{s=0}^i\sum_{\underline{a}}D_{1s}((V_{\underline{a}}^{0,0})^{(s)})^{D_{2s}}\nonumber\\
\leq&\prod_{s=0}^iD_{1s}(s!)^{D_{2s}}\sum_{n,k=1}^\infty\frac{(n)^{(j-1)D_{2s}}}{(n(k+x)+1)^{(j+1)D_{2s}}}\nonumber\\
\leq&\prod_{s=0}^iD_{1s}(s!)^{D_{2s}}\sum_{n,k=1}^\infty\frac{1}{n^{2D_{2s}}k^{(j+1)D_{2s}}}\nonumber\\
\leq&\prod_{s=0}^i\frac{D_{1s}\pi^4}{36}(s!)^{D_{2s}}\nonumber
\end{align}
which as the finite product of finite numbers is finite. Similarly
\begin{align}
&\sum_{\underline{a}}\prod_{s=0}^iD_{1s}((V_{\underline{a}}^{1,1})^{(s)})^{D_{2s}}\nonumber\\
\leq&\prod_{s=0}^i\sum_{\underline{a}}D_{1s}((V_{\underline{a}}^{1,1})^{(s)})^{D_{2s}}\nonumber\\
\leq&\prod_{s=0}^iD_{1s}(s!)^{D_{2s}}\sum_{n,k=1}^\infty\frac{(n+1)^{(j-1)D_{2s}}}{((n+1)(k+x)+1)^{(j+1)D_{2s}}}\nonumber\\
\leq&\prod_{s=0}^iD_{1s}(s!)^{D_{2s}}\sum_{n,k=1}^\infty\frac{1}{(n+1)^{2D_{2s}}k^{(j+1)D_{2s}}}\nonumber\\
\leq&\prod_{s=0}^i\frac{D_{1s}\pi^4}{36}(s!)^{D_{2s}}\nonumber
\end{align}
is finite. Since these are all the components of $C_i'$ and all of these are finite $C_i'$ must be finite.

This demonstrates the Lasota-Yorke inequality on the transfer operator $L_\epsilon$ for $\epsilon\in(0, \frac{1-\theta_1}{C_1-\theta_1})$. This give us quasi-compactness.

\subsection{Unique and simple eigenvalue}

With quasi-compactness shown the final requirement to show uniqueness of the invariant density in $C^l([0,1])$ is to show that $L_\epsilon$ has a unique and simple eigenvalue on the unit circle. This follows from the proof of Lemma 6.5 in \cite{Wael}.

\subsection{$k$ differentiability of $L_\epsilon h_0$ at $\epsilon=0$}\label{Sect:diffLeps}
Since $P_\omega$ as in \eqref{eq:nthiterate} is the product of $\epsilon^i$ and $(1-\epsilon)^j$ where $i+j=n$, it is differentiable in $\epsilon$ as many times as we need. Now since 
\begin{equation}
L^n_\epsilon h_0=\sum_{\omega\in\Omega^n}P_\omega\sum_{\underline{a}}|(V^\omega_{\underline{a}})'|h_0\circ V^\omega_{\underline{a}}\nonumber
\end{equation}
we have
\begin{equation}
\partial_\epsilon^k L^n_\epsilon h_0=\sum_{\omega\in\Omega^n} [\partial_\epsilon^k P_\omega]\sum_{\underline{a}}|(V^\omega_{\underline{a}})'|h_0\circ V^\omega_{\underline{a}}\nonumber
\end{equation}
so $L^n_\epsilon h_0$ is $k$ differentiable, including at $\epsilon=0$. 

We need to show that this is in $C^l([0,1])$. We can see this by noting that $L_\epsilon:C^l([0,1])\to C^l([0,1])$ means that $L_\epsilon^n h_0\in C^l([0,1])$ if $h_0$ is. The fact that $h_0\in C^l([0,1])$ follows from the fact that $L_\epsilon$ has a spectral gap in $C^l$ as shown previously, so $h_0\in C^{l}([0,1])$. We can see in the formula for $\partial_\epsilon^k L_\epsilon^n h_0$ that regualrity in $x$ is not reduced by differentiating in $\epsilon$, so $\partial_\epsilon^k L_\epsilon^n h_0\in C^l([0,1])$.

\subsection{$k-i$ differentiability of $(I-L_\epsilon)^{-1}G_i$}

Here we will use the fact that if $(I-L_\epsilon)^{-1}G_i$ is well defined and uniformly bounded, then $(I-L_\epsilon)^{-1}G_i=G_i+\sum_{i=1}^\infty L_\epsilon^nG_i$ is well defined. This means it is sufficient to show that $L_\epsilon^n$ is $k-i$ differentiable and such a derivative is in $C^l$.

We may note that 
\begin{equation}
L_\epsilon^nG_i=\sum_{\omega\in\Omega^n}P_\omega\sum_{\underline{a}}|(V^\omega_{\underline{a}})'|[\partial_\epsilon^i L_\epsilon h_0|_{\epsilon=0}]\circ V^\omega_{\underline{a}}.\nonumber
\end{equation}
Like before we have
\begin{equation}
\partial_\epsilon^{k-i} L_\epsilon^nG_i=\sum_{\omega\in\Omega^n} [\partial_\epsilon^{k-i} P_\omega]\sum_{\underline{a}}|(V^\omega_{\underline{a}})'|[\partial_\epsilon^i L_\epsilon h_0|_{\epsilon=0}]\circ V^\omega_{\underline{a}}.\nonumber
\end{equation}
It was shown in the previous section that $\partial_\epsilon^k L_\epsilon h_0$ is $C^l$, and therefore it is at $\epsilon=0$. By the same argument used in the previous section $\partial_\epsilon^{k-i} L_\epsilon^nG_i\in C^l([0,1])$.

\subsection{A formula for the $k^{th}$ order approximation of $h_\epsilon$}
Having demonstrated that all of the assumptions required to use proposition \ref{MainProp} for the Gauss-R\'enyi random map we can estimate the invariant density as follows.
We have from \eqref{Result} that
\begin{equation}
\partial_\epsilon h_\epsilon|_{\epsilon=0}=(I-L_0)^{-1}G_1\nonumber
\end{equation}
and
\begin{equation}
\partial_\epsilon^2 h_\epsilon|_{\epsilon=0}=\sum_{i=1}^2{{2} \choose {i}} H_{i,2-i}=2\cdot \partial_{\epsilon}(I-L_{\epsilon})^{-1}G_1|_{\epsilon=0}+(I-L_0)^{-1}G_2.\nonumber
\end{equation}
For the Gauss-R\'enyi map this gives us
\begin{align}
h_\epsilon=&h_0+\epsilon[ (I-L_0)^{-1}\partial_{\epsilon}L_{\epsilon}h_0|_{\epsilon=0}]\nonumber\\
+&{\epsilon^2}[\partial_{\epsilon}(I-L_{\epsilon})^{-1}\left[\partial_{\epsilon}L_{\epsilon}h_0|_{\epsilon=0}\right]|_{\epsilon=0}]+\frac{\epsilon^2}{2}(I-L_0)^{-1}\left[\partial_{\epsilon}^2L_{\epsilon}h_0|_{\epsilon=0}\right]+o(\epsilon^2)\nonumber
\end{align}
We note however that
\begin{equation}
L_\epsilon h_0 =(1-\epsilon)L_0 h_0+\epsilon L_1 h_0\nonumber
\end{equation}
and so
\begin{equation}
\partial_\epsilon L_\epsilon h_0 =-L_0 h_0+L_1 h_0=L_1 h_0 -h_0=G_1.\nonumber
\end{equation}
Since $\partial_\epsilon L_\epsilon h_0$ is independent of $\epsilon$ we have
\begin{equation}
\partial_\epsilon^k L_\epsilon h_0=0 \nonumber
\end{equation}
for all $k>1$, so $G_i=0$ for $i>1$, so $H_{i,j}=0$ for $i>1$. This means that
\begin{equation}
\partial_\epsilon^n h_\epsilon|_{\epsilon=0}=\sum_{i=1}^n {n \choose i} H_{i,n-i}={n \choose 1}H_{1,n-1}=n\cdot \partial^{n-1}_{\epsilon}(I-L_{\epsilon})^{-1}G_1|_{\epsilon=0}\nonumber
\end{equation}
which gives us that 
\begin{equation}\label{eq:aproxh}
h_\epsilon=h_0+\sum_{n=1}^k \frac{\epsilon^n}{(n-1)!} \partial^{n-1}_{\epsilon}(I-L_{\epsilon})^{-1}[L_1 h_0 -h_0]|_{\epsilon=0}+o(\epsilon^k).
\end{equation}
\subsection{The $n^{th}$ digit of the random continued fraction expansion}\label{subsect:Example}
We can use formula \eqref{eq:aproxh} to find $\lim_{n\to\infty}\int_{\Omega^{\mathbb{N}}}m(\{x\in[-1,1]:d_n=N\})\, {\rm d}\mathbb{P}_\epsilon(\omega)$. As stated in section \ref{sect:randcontfrac} we simply have to find $\int_{\Omega^{\mathbb{N}}}\mu(\{x\in[0,1]:b(\omega',x)=N\})\, {\rm d}\mathbb{P}_\epsilon(\omega)$. In order to do some explicit calculations, since the value of $b(\omega, x)$ depends on the first two symbols in $\omega$, we split $\Omega^{\mathbb N}$ into four mutually disjoint sets.

\begin{equation*}
\begin{split}
&\int_{\Omega^{\mathbb{N}}}\int_{[0,1]}1_{\{N\}}\circ b(\omega',x) h_\epsilon(x)\, {\rm d}x\,{\rm d}\mathbb{P}_\epsilon(\omega)
=\int_{\Omega^{\mathbb{N}}_{0,0}}\int_{(\frac{1}{N+1},\frac{1}{N}]}h_\epsilon(x)\, {\rm d}x\,{\rm d}\mathbb{P}_\epsilon(\omega)\\
&+\int_{\Omega^{\mathbb{N}}_{0,1}}\int_{(\frac{1}{N},\frac{1}{N-1}]}h_\epsilon(x)\, {\rm d}x\,{\rm d}\mathbb{P}_\epsilon(\omega)+\int_{\Omega^{\mathbb{N}}_{1,0}}\int_{[1-\frac{1}{N},1-\frac{1}{N+1})}h_\epsilon(x)\, {\rm d}x\,{\rm d}\mathbb{P}_\epsilon(\omega)\\
&+\int_{\Omega^{\mathbb{N}}_{1,1}}\int_{[1-\frac{1}{N-1},1-\frac{1}{N})}h_\epsilon(x)\, {\rm d}x\,{\rm d}\mathbb{P}_\epsilon(\omega\\
&=\int_{(\frac{1}{N+1},\frac{1}{N}]}h_\epsilon(x)\, {\rm d}x\int_{\Omega^{\mathbb{N}}_{0,0}}1\,{\rm d}\mathbb{P}_\epsilon(\omega)+\int_{(\frac{1}{N},\frac{1}{N-1}]}h_\epsilon(x)\, {\rm d}x\int_{\Omega^{\mathbb{N}}_{0,1}}1\,{\rm d}\mathbb{P}_\epsilon(\omega)\\
&+\int_{[1-\frac{1}{N},1-\frac{1}{N+1})}h_\epsilon(x)\, {\rm d}x\int_{\Omega^{\mathbb{N}}_{1,0}}1\,{\rm d}\mathbb{P}_\epsilon(\omega)
+\int_{[1-\frac{1}{N-1},1-\frac{1}{N})}h_\epsilon(x)\, {\rm d}x\int_{\Omega^{\mathbb{N}}_{1,1}}1\,{\rm d}\mathbb{P}_\epsilon(\omega),
\end{split}
\end{equation*}
where $\Omega^{\mathbb{N}}_{i,j}$ is the subset of $\Omega^{\mathbb{N}}$ such that $\omega_1=i$ and $\omega_2=j$. Since $T_0$ is chosen with probability $0<1-\epsilon<1$ and $T_1$ is chosen with probability $\epsilon$, we get
\begin{align}\label{ExampleExpand}
&\int_{\Omega^{\mathbb{N}}}\int_{[0,1]}1_{\{N\}}\circ b(\omega',x) h_\epsilon(x)\, {\rm d}x\,{\rm d}\mathbb{P}_\epsilon(\omega)
=(1-\epsilon)^2\int_{(\frac{1}{N+1},\frac{1}{N}]}h_\epsilon(x)\, {\rm d}x\\
+&(1-\epsilon)\epsilon\int_{(\frac{1}{N},\frac{1}{N-1}]}h_\epsilon(x)\, {\rm d}x
+\epsilon(1-\epsilon)\int_{[1-\frac{1}{N},1-\frac{1}{N+1})}h_\epsilon(x)\, {\rm d}x\nonumber\\
+&\epsilon^2\int_{[1-\frac{1}{N-1},1-\frac{1}{N})}h_\epsilon(x)\, {\rm d}x.\nonumber
\end{align}
Using our approximation \eqref{eq:aproxh} yields,
\begin{align}
&\int_{\Omega^{\mathbb{N}}}\int_{[0,1]}1_{\{N\}}\circ b(\omega',x) (h_0+\sum_{n=1}^k \frac{\epsilon^n}{(n-1)!} \partial^{n-1}_{\epsilon}(I-L_{\epsilon})^{-1}[L_1 h_0 -h_0]|_{\epsilon=0}+o(\epsilon^k))\, {\rm d}x\,{\rm d}\mathbb{P}(\omega)\nonumber\\
&=(1-\epsilon)^2\int_{(\frac{1}{N+1},\frac{1}{N}]}h_0+\sum_{n=1}^k \frac{\epsilon^n}{(n-1)!} \partial^{n-1}_{\epsilon}(I-L_{\epsilon})^{-1}[L_1 h_0 -h_0]|_{\epsilon=0}+o(\epsilon^k)\, {\rm d}x\nonumber\\
&+(1-\epsilon)\epsilon\int_{(\frac{1}{N},\frac{1}{N-1}]}h_0+\sum_{n=1}^k \frac{\epsilon^n}{(n-1)!} \partial^{n-1}_{\epsilon}(I-L_{\epsilon})^{-1}[L_1 h_0 -h_0]|_{\epsilon=0}+o(\epsilon^k)\, {\rm d}x\nonumber\\
&+\epsilon(1-\epsilon)\int_{[1-\frac{1}{N},1-\frac{1}{N+1})}h_0+\sum_{n=1}^k \frac{\epsilon^n}{(n-1)!} \partial^{n-1}_{\epsilon}(I-L_{\epsilon})^{-1}[L_1 h_0 -h_0]|_{\epsilon=0}+o(\epsilon^k)\, {\rm d}x\nonumber\\
&+\epsilon^2\int_{[1-\frac{1}{N-1},1-\frac{1}{N})}h_0+\sum_{n=1}^k \frac{\epsilon^n}{(n-1)!} \partial^{n-1}_{\epsilon}(I-L_{\epsilon})^{-1}[L_1 h_0 -h_0]|_{\epsilon=0}+o(\epsilon^k)\, {\rm d}x\nonumber\\
&=(1-\epsilon)^2\int_{(\frac{1}{N+1},\frac{1}{N}]}h_0+\sum_{n=1}^k \frac{\epsilon^n}{(n-1)!} \partial^{n-1}_{\epsilon}(I-L_{\epsilon})^{-1}[L_1 h_0 -h_0]|_{\epsilon=0}\, {\rm d}x\nonumber\\
&+(1-\epsilon)\epsilon\int_{(\frac{1}{N},\frac{1}{N-1}]}h_0+\sum_{n=1}^k \frac{\epsilon^n}{(n-1)!} \partial^{n-1}_{\epsilon}(I-L_{\epsilon})^{-1}[L_1 h_0 -h_0]|_{\epsilon=0}\, {\rm d}x\nonumber\\
&+\epsilon(1-\epsilon)\int_{[1-\frac{1}{N},1-\frac{1}{N+1})}h_0+\sum_{n=1}^k \frac{\epsilon^n}{(n-1)!} \partial^{n-1}_{\epsilon}(I-L_{\epsilon})^{-1}[L_1 h_0 -h_0]|_{\epsilon=0}\, {\rm d}x\nonumber\\
&+\epsilon^2\int_{[1-\frac{1}{N-1},1-\frac{1}{N})}h_0+\sum_{n=1}^k \frac{\epsilon^n}{(n-1)!} \partial^{n-1}_{\epsilon}(I-L_{\epsilon})^{-1}[L_1 h_0 -h_0]|_{\epsilon=0}\, {\rm d}x+o(\epsilon^k)\nonumber.\\
\label{al:ResultApprox}
\end{align}
\begin{example}
Here we use these \eqref{al:ResultApprox} to analytically estimate the distribution of $N=5$, using $k=2$, in a random continued fraction with weights $1-\epsilon$ and $\epsilon$. We have
\begin{align}
&\lim_{n\to\infty}\int_{\Omega^{\mathbb{N}}}m(\{x\in[-1,1]:d_n=5\})d\mathbb{P}_\epsilon(\omega)\label{eq:Example1}\\
=&\int_{\Omega^{\mathbb{N}}}\int_{[0,1]}1_{\{5\}}\circ b(\omega',x) h_0\, {\rm d}x\,{\rm d}\mathbb{P}_\epsilon(\omega)\nonumber\\
+&\epsilon\int_{\Omega^{\mathbb{N}}}\int_{[0,1]}1_{\{5\}}\circ b(\omega',x) (I-L_0)^{-1}[L_1 h_0-h_0]\, {\rm d}x\,{\rm d}\mathbb{P}_\epsilon(\omega)\nonumber\\
+&\epsilon^2\int_{\Omega^{\mathbb{N}}}\int_{[0,1]}1_{\{5\}}\circ b(\omega',x)\partial_\epsilon (I-L_\epsilon)^{-1}[L_1 h_0-h_0]|_{\epsilon=0}\, {\rm d}x\,{\rm d}\mathbb{P}_\epsilon(\omega)+o(\epsilon^2)\nonumber.
\end{align}
Using \eqref{al:ResultApprox} we can write as
\begin{align}
&\lim_{n\to\infty}\int_{\Omega^{\mathbb{N}}}m(\{x\in[-1,1]:d_n=5\})d\mathbb{P}_\epsilon(\omega)=(1-\epsilon)^2\int_{(\frac{1}{6},\frac{1}{5}]}h_0(x)\, {\rm d}x\nonumber\\
&=(1-\epsilon)\epsilon\int_{(\frac{1}{5},\frac{1}{4}]}h_0(x)\, {\rm d}x
+\epsilon(1-\epsilon)\int_{[\frac{4}{5},\frac{5}{6})}h_0(x)\, {\rm d}x+\epsilon^2\int_{[\frac{3}{4},\frac{4}{5})}h_0(x)\, {\rm d}x\nonumber\\
&+\epsilon(1-\epsilon)^2\int_{(\frac{1}{6},\frac{1}{5}]}(I-L_0)^{-1}[L_1 h_0-h_0](x)+\epsilon\partial_\epsilon (I-L_\epsilon)^{-1}[L_1 h_0-h_0]|_{\epsilon=0}(x)\, {\rm d}x\nonumber\\
&+(1-\epsilon)\epsilon^2\int_{(\frac{1}{5},\frac{1}{4}]}(I-L_0)^{-1}[L_1 h_0-h_0](x)+\epsilon\partial_\epsilon (I-L_\epsilon)^{-1}[L_1 h_0-h_0]|_{\epsilon=0}(x)\, {\rm d}x\nonumber\\
&+\epsilon^2(1-\epsilon)\int_{[\frac{4}{5},\frac{5}{6})}(I-L_0)^{-1}[L_1 h_0-h_0](x)+\epsilon\partial_\epsilon (I-L_\epsilon)^{-1}[L_1 h_0-h_0]|_{\epsilon=0}(x)\, {\rm d}x\nonumber\\
&+\epsilon^3\int_{[\frac{3}{4},\frac{4}{5})}(I-L_0)^{-1}[L_1 h_0-h_0](x)+\epsilon\partial_\epsilon (I-L_\epsilon)^{-1}[L_1 h_0-h_0]|_{\epsilon=0}(x)\, {\rm d}x+o(\epsilon^2).\nonumber
\end{align}
We compute the first four terms\footnote{All other terms can be computed rigorously using the computer. See \cite{Wael2} for details.} in the above expression and include several terms in $o(\epsilon^2)$ to get:
\begin{align*}
&=(1-\epsilon)^2\frac{\log{\frac{36}{35}}}{\log{2}}
+(1-\epsilon)\epsilon\frac{\log{\frac{25}{24}}}{\log{2}}
+\epsilon(1-\epsilon)\frac{\log{\frac{55}{54}}}{\log{2}}
+\epsilon^2\frac{\log{\frac{36}{35}}}{\log{2}}\\
&+\epsilon(1-\epsilon)^2\int_{(\frac{1}{6},\frac{1}{5}]}(I-L_0)^{-1}[L_1 h_0-h_0](x)+\epsilon\partial_\epsilon (I-L_\epsilon)^{-1}[L_1 h_0-h_0]|_{\epsilon=0}(x)\, {\rm d}x\\
&+(1-\epsilon)\epsilon^2\int_{(\frac{1}{5},\frac{1}{4}]}(I-L_0)^{-1}[L_1 h_0-h_0](x)\, {\rm d}x\\
&+\epsilon^2(1-\epsilon)\int_{[\frac{4}{5},\frac{5}{6})}(I-L_0)^{-1}[L_1 h_0-h_0](x)\, {\rm d}x+o(\epsilon^2).\nonumber
\end{align*} 
\end{example}

\end{document}